\newtheorem{lemma}{Lemma}
\newtheorem{theorem}{Theorem}
\newtheorem{question}{Question}
\newcommand{\N}{\mathbb N}
\newcommand{\Z}{\mathbb Z}
\newcommand{\K}{\textbf{\textit{K}}}
\author{Apoloniusz Tyszka}
\title{\large{Is there an algorithm which takes as input a Diophantine
equation, returns an integer, and this integer is greater than
the number of integer solutions, if the solution set is finite?}}
\date{}
\begin{document}
\def\UrlFont{\em}
\begin{sloppypar}
\maketitle
\begin{abstract}
Let \mbox{$E_n=\{x_i=1,~x_i+x_j=x_k,~x_i \cdot x_j=x_k: i,j,k \in \{1,\ldots,n\}\}$}.
For a positive integer $n$, let \mbox{$f(n)$} denote the greatest finite total
number of solutions of a subsystem of $E_n$ in integers \mbox{$x_1,\ldots,x_n$}.
We prove: {\tt (1)}~the function~$f$ is strictly increasing, {\tt (2)}~if a non-decreasing
function $g$ from positive integers to positive integers satisfies \mbox{$f(n) \leq g(n)$}
for any $n$, then a \mbox{finite-fold} Diophantine representation of $g$ does not exist,
{\tt (3)}~if the question of the title has a positive answer, then there is a computable
strictly increasing function~$g$ from positive integers to positive integers such that
\mbox{$f(n) \leq g(n)$} for any $n$ and a \mbox{finite-fold} Diophantine representation
of $g$ does not exist.
\end{abstract}
{\bf Key words:} Davis-Putnam-Robinson-Matiyasevich theorem; Diophantine equation with
a finite number of integer solutions; \mbox{finite-fold} Diophantine representation.
\vskip 0.5truecm
\noindent
{\bf 2010 Mathematics Subject Classification:} 11U05, 03D25.
\vskip 0.5truecm
\par
The Davis-Putnam-Robinson-Matiyasevich theorem states that every recursively enumerable
set \mbox{${\cal M} \subseteq {\N}^n$} has a Diophantine representation, that is
\begin{equation}
\tag*{\tt (R)}
(a_1,\ldots,a_n) \in {\cal M} \Longleftrightarrow
\exists x_1, \ldots, x_m \in \N ~~W(a_1,\ldots,a_n,x_1,\ldots,x_m)=0
\end{equation}
for some polynomial $W$ with integer coefficients, see \cite{Matiyasevich1}.
The polynomial~$W$ can be computed, if we know a Turing machine~$M$
such that, for all \mbox{$(a_1,\ldots,a_n) \in {\N}^n$}, $M$ halts on \mbox{$(a_1,\ldots,a_n)$} if and only if
\mbox{$(a_1,\ldots,a_n) \in {\cal M}$}, see \cite{Matiyasevich1}.
\vskip 0.2truecm
\par
The representation~{\tt (R)} is said to be \mbox{finite-fold} if for any \mbox{$a_1,\ldots,a_n \in \N$}
the equation \mbox{$W(a_1,\ldots,a_n,x_1,\ldots,x_m)=0$} has only finitely many solutions
\mbox{$(x_1,\ldots,x_m) \in {\N}^m$}.
\vskip 0.2truecm
\noindent
{\bf Open Problem}~(\mbox{\cite[pp.~341--342]{DMR}}, \mbox{\cite[p.~42]{Matiyasevich2}}, \mbox{\cite[p.~79]{Matiyasevich3}}).
{\em Does each recursively enumerable set \mbox{${\cal M} \subseteq {\N}^n$} has a \mbox{finite-fold} Diophantine representation?}
\vskip 0.3truecm
\par
\mbox{Yu. Matiyasevich} conjectures that the answer is yes. Let us pose the following two questions:
\begin{question}\label{que1}
Is there an algorithm which takes as input a Diophantine equation, returns an integer, and this
integer is greater than the modulus of integer solutions, if the solution set is finite?
\end{question}
\begin{question}\label{que2}
Is there an algorithm which takes as input a Diophantine equation, returns an integer, and this
integer is greater than the number of integer solutions, if the solution set is finite?
\end{question}
\par
Obviously, a positive answer to Question~\ref{que1} implies a positive answer to Question~\ref{que2}.
A positive answer to Question~\ref{que1} contradicts Matiyasevich's conjecture,
see \cite[p.~42]{Matiyasevich2}.
We are going to show that a positive answer to Question~\ref{que2} also contradicts Matiyasevich's conjecture.
\vskip 0.2truecm
\par
Let \mbox{$\cal{R}${\sl ng}} denote the class of all rings $\K$ that extend $\Z$, and let
\[
E_n=\{x_i=1,~x_i+x_j=x_k,~x_i \cdot x_j=x_k: i,j,k \in \{1,\ldots,n\}\}
\]
\begin{lemma}\label{lem1}
(\mbox{\cite[p.~720]{Tyszka}}) Let \mbox{$D(x_1,\ldots,x_p) \in {\Z}[x_1,\ldots,x_p]$}.
Assume that \mbox{$d_i={\rm deg}(D,x_i) \geq 1$} for each \mbox{$i \in \{1,\ldots,p\}$}. We can compute a positive
integer \mbox{$n>p$} and a system \mbox{$T \subseteq E_n$} which satisfies the following two conditions:
\vskip 0.2truecm
\noindent
{\tt Condition 1.} If \mbox{$\K \in {\cal R}{\sl ng} \cup \{\N,~\N \setminus \{0\}\}$}, then
\[
\forall \tilde{x}_1,\ldots,\tilde{x}_p \in \K ~\Bigl(D(\tilde{x}_1,\ldots,\tilde{x}_p)=0 \Longleftrightarrow
\]
\[
\exists \tilde{x}_{p+1},\ldots,\tilde{x}_n \in \K ~(\tilde{x}_1,\ldots,\tilde{x}_p,\tilde{x}_{p+1},\ldots,\tilde{x}_n) ~solves~ T\Bigr)
\]
{\tt Condition 2.} If \mbox{$\K \in {\cal R}{\sl ng} \cup \{\N,~\N \setminus \{0\}\}$}, then
for each \mbox{$\tilde{x}_1,\ldots,\tilde{x}_p \in \K$} with \mbox{$D(\tilde{x}_1,\ldots,\tilde{x}_p)=0$},
there exists a unique tuple \mbox{$(\tilde{x}_{p+1},\ldots,\tilde{x}_n) \in {\K}^{n-p}$} such that the tuple
\mbox{$(\tilde{x}_1,\ldots,\tilde{x}_p,\tilde{x}_{p+1},\ldots,\tilde{x}_n)$} \mbox{solves $T$}.
\vskip 0.2truecm
\noindent
Conditions 1 and 2 imply that for each \mbox{$\K \in {\cal R}{\sl ng} \cup \{\N,~\N \setminus \{0\}\}$}, the equation
\mbox{$D(x_1,\ldots,x_p)=0$} and the \mbox{system $T$} have the same number of solutions \mbox{in $\K$}.
\end{lemma}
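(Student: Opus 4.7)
The plan is to turn the evaluation of $D$ into a straight-line program whose instructions are exactly of the three types $x_i=1$, $x_i+x_j=x_k$, and $x_i\cdot x_j=x_k$, and to allocate one fresh auxiliary variable per instruction. Since each new variable is then forced by its defining equation to equal a fixed function of previously introduced ones, Condition~2 will essentially come for free.

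First I would write $D=A-B$ with $A,B\in\N[x_1,\ldots,x_p]$, separating the monomials of $D$ according to the sign of their coefficients; as a minor safeguard I would replace $(A,B)$ by $(A+1,B+1)$ so that neither sum is empty. Then $D(\tilde x)=0$ is equivalent to $A(\tilde x)=B(\tilde x)$ in every $\K\in{\cal R}{\sl ng}\cup\{\N,\N\setminus\{0\}\}$, so it suffices to encode the two evaluations and force the outputs to coincide.

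The sequence of auxiliary variables and their defining equations would be, in order: a variable $u$ with $u=1$; variables representing the integers $2,\ldots,N$, where $N$ is the largest coefficient occurring in $A$ or $B$, built by successive additions of $u$; variables representing the powers $x_i^j$ for $2\le j\le d_i$, built by successive multiplications by $x_i$; for each monomial $M=x_1^{a_1}\cdots x_p^{a_p}$ of $A$ or $B$, a variable for $M$ obtained by chaining pairwise multiplications of the power variables; for each term $\alpha M$ of $A$ or $B$, one multiplication variable representing $\alpha M$ (using the variable for the integer $\alpha$); and two addition chains assembling these terms into variables $v_A$ and $v_B$ equal respectively to $A(\tilde x)$ and $B(\tilde x)$. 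To force $v_A=v_B$ without leaving the syntax of $E_n$, I would append one last variable $z$ together with the pair of equations $z=v_A+u$ and $z=v_B+u$; cancellability of $1$ in each $\K$ under consideration makes this equivalent to $v_A=v_B$.

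Condition~1 then follows by induction along the construction: in any solution extending $(\tilde x_1,\ldots,\tilde x_p)$, each auxiliary variable is forced to its intended value, so the system is satisfiable iff $A(\tilde x)=B(\tilde x)$ iff $D(\tilde x)=0$. Condition~2 follows because every auxiliary variable, including $z$, is the sole output of one equation in terms of quantities already fixed. The point requiring the most care is exactly this final equality step: one cannot write $v_A=v_B$ as an $E_n$-equation, and the added variable $z$ has to be chosen so as to enforce the constraint without creating a new degree of freedom. A secondary subtlety is the case $\K=\N\setminus\{0\}$, handled by noting that every auxiliary value is a sum or product of previously introduced positive quantities starting from $u=1$ and the positive $\tilde x_i$, while the $(A+1,B+1)$ adjustment avoids the need ever to represent the value $0$.
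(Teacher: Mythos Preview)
The paper does not contain a proof of this lemma; it is quoted from \cite[p.~720]{Tyszka} and stated without argument, so there is no in-paper proof to compare your attempt against.

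That said, your construction is correct and is the standard reduction. Writing $D=A-B$ with $A,B\in\N[x_1,\dots,x_p]$, allocating one fresh variable per atomic step (the constant~$1$; successive additions of~$1$ to reach each needed coefficient; successive multiplications to build the powers $x_i^j$, then the monomials, then the terms; two addition chains producing $v_A$ and $v_B$), and closing with the extra variable $z$ subject to $v_A+u=z$ and $v_B+u=z$ does exactly what is required. Condition~2 follows because each auxiliary variable is the unique output of its defining equation, so its value is forced once $\tilde x_1,\dots,\tilde x_p$ are fixed; Condition~1 then follows since the two final equations are jointly equivalent to $A(\tilde x)=B(\tilde x)$ by additive cancellation in every $\K$ under consideration. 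Your $(A{+}1,B{+}1)$ adjustment correctly handles the case $\K=\N\setminus\{0\}$ by keeping every intermediate value positive.
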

\par
For a positive integer $n$, let \mbox{$f(n)$} denote the greatest finite total number of solutions of
a subsystem of $E_n$ in integers \mbox{$x_1,\ldots,x_n$}. Obviously, \mbox{$f(1)=2$} as the equation
\mbox{$x_1 \cdot x_1=x_1$} has exactly two integer solutions.
\begin{lemma}\label{lem2}
For each positive integer $n$, \mbox{$f(n+1) \geq 2 \cdot f(n)>f(n)$}.
\end{lemma}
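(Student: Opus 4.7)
The plan is to build a subsystem of $E_{n+1}$ with exactly $2f(n)$ integer solutions by taking an extremal subsystem of $E_n$ and adjoining one extra equation in the new variable $x_{n+1}$ whose sole role is to contribute a factor of $2$ to the solution count.

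First, I would fix a subsystem $T \subseteq E_n$ whose solution set in $\Z^n$ is finite and has exactly $f(n)$ elements; such a $T$ exists by the very definition of $f(n)$. Next, I would form
\[
T' \;=\; T \;\cup\; \{x_{n+1} \cdot x_{n+1} = x_{n+1}\},
\]
which is a subsystem of $E_{n+1}$ since $E_n \subseteq E_{n+1}$ and the new equation is of the form $x_i \cdot x_j = x_k$ with $i=j=k=n+1 \in \{1,\ldots,n+1\}$. The key point is that $T$ involves only $x_1,\ldots,x_n$, while the extra equation involves only $x_{n+1}$, so the two factors of the solution set are independent.

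Next I would observe that the integer solutions of $x_{n+1}^2 = x_{n+1}$ in $\Z$ are exactly $x_{n+1} \in \{0,1\}$, giving two values. Therefore the integer solutions of $T'$ in $(x_1,\ldots,x_{n+1})$ are in bijection with (solutions of $T$) $\times \{0,1\}$, so there are exactly $2 \cdot f(n)$ of them. In particular this number is finite, so $f(n+1) \geq 2 \cdot f(n)$ from the definition of $f(n+1)$.

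Finally, since the subsystem $\{x_1 \cdot x_1 = x_1\} \subseteq E_1$ already forces $f(1) \geq 2$, an immediate induction using the inequality $f(n+1) \geq 2 f(n)$ gives $f(n) \geq 2$ for all $n \geq 1$, whence $2 \cdot f(n) > f(n)$. I do not expect any real obstacle here: the only thing to verify carefully is the independence of the two groups of variables, which is transparent, so the argument is essentially a one-line construction followed by a bookkeeping remark.
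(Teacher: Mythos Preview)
Your proposal is correct and follows essentially the same approach as the paper: adjoin the equation $x_{n+1}\cdot x_{n+1}=x_{n+1}$ to an extremal subsystem of $E_n$ to double the solution count. Your write-up simply adds more detail (the independence of variables, and the verification that $f(n)\geq 1$ so that $2f(n)>f(n)$), which the paper leaves implicit.
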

\begin{proof}
If $r$ is a positive integer and a system \mbox{$S \subseteq E_n$} has exactly $r$
solutions in integers \mbox{$x_1,\ldots,x_n$}, then the system
\mbox{$S \cup \{x_{n+1} \cdot x_{n+1}=x_{n+1}\} \subseteq E_{n+1}$} has exactly
\mbox{$2r$} solutions in integers \mbox{$x_1,\ldots,x_{n+1}$}.
\end{proof}
\noindent
{\bf Corollary.} {\em The function $f$ is strictly increasing.}
\vskip 0.2truecm
\par
\mbox{A function} \mbox{$\beta: \N \setminus \{0\} \to \N \setminus \{0\}$} is said to majorize a function
\mbox{$\alpha: \N \setminus \{0\} \to \N \setminus \{0\}$} provided \mbox{$\alpha(n) \leq \beta(n)$}
for any $n$.
\begin{theorem}\label{the1}
If a non-decreasing function \mbox{$g:\N \setminus \{0\} \to \N \setminus \{0\}$} majorizes $f$,
then a \mbox{finite-fold} Diophantine representation of $g$ does not exist.
\end{theorem}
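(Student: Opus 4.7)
I would argue by contradiction. Suppose a non-decreasing $g:\N\setminus\{0\}\to\N\setminus\{0\}$ majorizes $f$ and admits a \mbox{finite-fold} Diophantine representation
\[
g(x_1)=x_2\ \Longleftrightarrow\ \exists x_3,\ldots,x_m\in\N\quad W(x_1,x_2,x_3,\ldots,x_m)=0,
\]
so that for every $(x_1,x_2)\in\N^2$ the witness set in $\N^{m-2}$ is finite.

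The key construction is to build, for each positive integer $N$, a polynomial $D_N$ whose zero set is finite and has cardinality at least $g(N)+1$. A natural candidate couples $W$ with an auxiliary equation whose solution set has size exactly $g(N)+1$; for instance, starting from
\[
D_N=W(N,u,v_1,\ldots,v_m)^2+(x+y-u)^2,
\]
one forces $u=g(N)$, the tuple $(v_1,\ldots,v_m)$ to be one of finitely many witnesses, and $(x,y)$ to be one of the $u+1$ pairs summing to $u$; over $\N$ the total count is then at least $g(N)+1$ and finite. Applying Lemma~\ref{lem1} to $D_N$ yields a subsystem $T_N\subseteq E_{n(N)}$ with the same solution count, and in particular $f(n(N))\ge g(N)+1$.

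The crucial quantitative input is a bound $n(N)\le C_1\log N+C_2$ with constants depending only on $W$. Indeed, the parameter $N$ enters $D_N$ only as an integer coefficient, and it can be assembled inside an $E_n$ system from the constant $1$ using $O(\log N)$ additional variables (via repeated addition and multiplication), while the remainder of the construction contributes a bounded number of variables tied only to $W$. Hence $n(N)\le N$ for all $N$ large enough, and for such $N$ the hypothesis $g\ge f$ together with monotonicity of $g$ collapses to
\[
g(N)\ \ge\ g(n(N))\ \ge\ f(n(N))\ \ge\ g(N)+1,
\]
which is impossible.

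The main obstacle, on which I expect most of the care, is that finite-foldness is hypothesized over $\N$ whereas $f$ counts integer solutions, and the naive $D_N$ above has infinitely many solutions over $\Z$ (from $x+y=u$ alone, for any fixed $u$). I plan to remedy this by passing each variable intended to be non-negative through a Lagrange \mbox{four-squares} substitution; this keeps the integer solution count finite, inflates it by a controlled combinatorial factor (the number of representations of each value as a sum of four integer squares), and for $g(N)$ large enough still leaves a count at least $g(N)+1$. The degree-at-least-one hypothesis of Lemma~\ref{lem1} can be met by inserting a few dummy variables at negligible cost.
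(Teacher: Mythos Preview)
Your plan is correct and follows essentially the same route as the paper. The paper carries out precisely this argument with a concrete choice $N=2^{2^s}$ (built inside the $E_n$ framework by $t_1=1$, $t_1+t_1=t_2$, then repeated squaring), yielding an explicit variable count of $2s+23$ in place of your asymptotic bound $n(N)\le C_1\log N+C_2$; otherwise the ingredients---forcing $x_1=N$ and $x_2=g(N)$ via the finite-fold representation, manufacturing at least $g(N)+1$ solutions from ordered pairs summing to $x_2$, and handling the $\N$ versus $\Z$ mismatch by Lagrange four-squares substitutions---are identical.
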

\begin{proof}
Assume, on the contrary, that there is a \mbox{finite-fold} Diophantine representation of $g$.
It means that there is a polynomial \mbox{$W(x_1,x_2,x_3,\ldots,x_m)$} with integer coefficients
such that
\begin{description}
\item{{\tt (1)~}}
for any \mbox{non-negative} integers~\mbox{$x_1,x_2$},
\[
(x_1,x_2) \in g \Longleftrightarrow \exists x_3,\ldots,x_m \in \N ~~W(x_1,x_2,x_3,\ldots,x_m)=0
\]
\end{description}
and for each \mbox{non-negative} integers \mbox{$x_1,x_2$} at most finitely many tuples
\mbox{$(x_3,\ldots,x_m) \in {\N}^{m-2}$} satisfy \mbox{$W(x_1,x_2,x_3,\ldots,x_m)=0$}.
By Lemma~\ref{lem1} for \mbox{$\K=\N$}, there is a formula \mbox{$\Phi(x_1,x_2,x_3,\ldots,x_s)$} such that
\begin{description}
\item{{\tt (2)~}}
\mbox{$s \geq {\rm max}(m,3)$} and \mbox{$\Phi(x_1,x_2,x_3,\ldots,x_s)$} is a conjunction of formulae of the forms
\mbox{$x_i=1$}, \mbox{$x_i+x_j=x_k$}, \mbox{$x_i \cdot x_j=x_k$} \mbox{$(i,j,k \in \{1,\ldots,s\})$} which equivalently
expresses that \mbox{$W(x_1,x_2,x_3,\ldots,x_m)=0$} and each \mbox{$x_i~(i=1,\ldots,m)$} is a sum of four squares.
\end{description}
Let $S$ denote the following system
\[\left\{
\begin{array}{rcl}
a \cdot a &=& A \\
b \cdot b &=& B \\
c \cdot c &=& C \\
d \cdot d &=& D \\
A+B &=& u_1 \\
C+D &=& u_2 \\
u_1+u_2 &=& u_3 \\
\tilde{a} \cdot \tilde{a} &=& \tilde{A} \\
\tilde{b} \cdot \tilde{b} &=& \tilde{B} \\
\tilde{c} \cdot \tilde{c} &=& \tilde{C} \\
\tilde{d} \cdot \tilde{d} &=& \tilde{D} \\
\tilde{A}+\tilde{B} &=& \tilde{u}_1 \\
\tilde{C}+\tilde{D} &=& \tilde{u}_2 \\
\tilde{u}_1+\tilde{u}_2 &=& \tilde{u}_3 \\
u_3+\tilde{u}_3 &=& x_2 \\
t_1 &=& 1 \\
t_1+t_1 &=& t_2 \\
t_2 \cdot t_2 &=& t_3 \\
t_3 \cdot t_3 &=& t_4 \\
&\ldots& \\
t_{s-1} \cdot t_{s-1} &=& t_s \\
t_s \cdot t_s &=& t_{s+1} \\
t_{s+1} \cdot t_{s+1} &=& x_1 \\
{\rm all~equations~occurring~in~}\Phi(x_1,x_2,x_3,\ldots,x_s) \\
\end{array}
\right.\]
with \mbox{$2s+23$} variables. The system $S$ equivalently expresses the following conjunction:
\[
\Biggl(\left(a^2+b^2+c^2+d^2\right)+\left({\tilde{a}}^2+{\tilde{b}}^2+{\tilde{c}}^2+{\tilde{d}}^2\right)=x_2\Biggr) \wedge
\Biggl(x_1=2^{\textstyle 2^s}\Biggr) \wedge \Phi(x_1,x_2,x_3,\ldots,x_s)
\]
Conditions {\tt (1)-(2)} and Lagrange's four-square theorem imply that the system~$S$ is satisfiable over
integers and has only finitely many integer solutions. Let $L$ denote the number of integer solutions to $S$.
If an integer tuple solves $S$, then \mbox{$x_1=2^{\textstyle 2^s}$} and \mbox{$x_2=g(x_1)=g\left(2^{\textstyle 2^s}\right)$}.
Since the equation \mbox{$u_3+\tilde{u}_3=x_2$} belongs to $S$ and Lagrange's four-square theorem holds,
\mbox{$L \geq g\left(2^{\textstyle 2^s}\right)+1$}. The definition of~$f$ implies that
\begin{equation}
\tag*{{\tt (3)}}
L \leq f\left(2s+23\right)
\end{equation}
Since $g$ majorizes $f$,
\begin{equation}
\tag*{{\tt (4)}}
f\left(2s+23\right)<g\left(2s+23\right)+1
\end{equation}
Since \mbox{$s \geq 3$} and $g$ is non-decreasing,
\begin{equation}
\tag*{{\tt (5)}}
g\left(2s+23\right)+1 \leq g\left(2^{\textstyle 2^s}\right)+1
\end{equation}
Inequalities {\tt (3)-(5)} imply that \mbox{$L<g\left(2^{\textstyle 2^s}\right)+1$}, a contradiction.
\end{proof}
\begin{theorem}\label{the2}
If Question~\ref{que2} has a positive answer, then there is a computable strictly increasing
function \mbox{$g:\N \setminus \{0\} \to \N \setminus \{0\}$} such that $g$ majorizes $f$ and
a \mbox{finite-fold} Diophantine representation of $g$ does not exist.
\end{theorem}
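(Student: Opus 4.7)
The plan is to use the algorithm hypothetically supplied by Question~\ref{que2} to produce a computable upper bound for $f$, and then invoke Theorem~\ref{the1} to conclude that no finite-fold Diophantine representation of this upper bound exists. The only mildly technical ingredient is the well-known reduction from a finite system of integer-coefficient polynomial equations to a single such equation with the same integer solution set, performed via sum of squares.

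Assume Question~\ref{que2} has a positive answer, and let $A$ be the corresponding algorithm: given any polynomial equation $W(y_1,\ldots,y_k)=0$ with integer coefficients, $A$ returns an integer which strictly exceeds the number of integer solutions whenever the solution set is finite. For each positive integer $n$, the power set of $E_n$ is finite and can be enumerated explicitly. Each subsystem $T \subseteq E_n$ can be encoded algorithmically as the single equation $P_T(x_1,\ldots,x_n)=0$, where
\[
P_T(x_1,\ldots,x_n) = \sum_{(L=R) \in T} (L-R)^2.
\]
Because a sum of squares of integers vanishes only when each summand vanishes, the integer solutions of $P_T=0$ in $\Z^n$ are precisely the integer solutions of the system $T$ in $(x_1,\ldots,x_n)$.

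Define $h(n) = \max\{A(P_T) : T \subseteq E_n\}$; the maximum is taken over a finite, explicitly enumerable collection, so $h$ is computable. If $T^{\ast} \subseteq E_n$ realizes the value $f(n)$ (which is finite by the definition of $f$), then $A(P_{T^{\ast}}) > f(n)$, and hence $h(n) > f(n)$. To upgrade $h$ into a strictly increasing function with values in $\N \setminus \{0\}$, put
\[
g(n) = n + \max\{1, h(1), h(2),\ldots, h(n)\}.
\]
Then $g$ is computable (the running maximum is algorithmic), strictly increasing (the additive ``$+n$'' forces $g(n+1) > g(n)$), and satisfies $g(n) \geq h(n) > f(n)$ for every $n$, so $g$ majorizes $f$.

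I expect no serious obstacle in this argument: the sum-of-squares reduction is routine, the enumeration of subsystems is finite, and Theorem~\ref{the1} does the essential work. The only subtlety is arranging that $g$ is simultaneously computable, strictly increasing, and a majorant of $f$; padding $h$ by $n$ and by the running maximum handles all three requirements cleanly. Applying Theorem~\ref{the1} to $g$ then yields the nonexistence of a finite-fold Diophantine representation, completing the proof.
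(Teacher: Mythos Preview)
Your proof is correct and follows essentially the same approach as the paper: reduce each subsystem of $E_n$ to a single Diophantine equation, feed these into the hypothetical algorithm to obtain a computable majorant $h$ of $f$, then monotonize $h$ into a strictly increasing $g$ and apply Theorem~\ref{the1}. The only cosmetic differences are that the paper parameterizes by the \emph{length} of the resulting equation and builds $g$ as a partial sum $g(n)=\sum_{i=1}^n h(i)$, whereas you enumerate subsystems directly and use $g(n)=n+\max\{1,h(1),\ldots,h(n)\}$; both devices serve the same purpose.
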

\begin{proof}
For each positive integer $r$, there are only finitely many Diophantine equations whose
lengths are not greater than $r$, and these equations can be algorithmically constructed.
This and the assumption that the question of the title has a positive answer imply that
there exists a computable function \mbox{$\delta: \N \setminus \{0\} \to \N \setminus \{0\}$}
such that for each positive integer $r$ and for each Diophantine equation whose length is not
greater than $r$, $\delta(r)$ is greater than the number of integer solutions if the solution set is finite.
There is a computable function \mbox{$\psi:\N \setminus \{0\} \to \N \setminus \{0\}$}
such that each subsystem of $E_n$ is equivalent to a Diophantine equation whose length
is not greater than $\psi(n)$. The function
\[
\N \setminus \{0\} \ni n \stackrel{\textstyle h}{\longmapsto} \delta(\psi(n)) \in \N \setminus \{0\}
\]
is computable. The definition of $f$ implies that $h$ majorizes $f$. The function
\[
\N \setminus \{0\} \ni n \stackrel{\textstyle g}{\longmapsto} \sum_{i=1}^n h(i) \in \N \setminus \{0\}
\]
is computable and strictly increasing. Since $g$ majorizes $h$ and $h$ majorizes $f$, \mbox{$g$ majorizes $f$}.
By Theorem~\ref{the1}, a \mbox{finite-fold} Diophantine representation of $g$ does not exist.
\end{proof}
\newpage

\noindent
Apoloniusz Tyszka\\
University of Agriculture\\
Faculty of Production and Power Engineering\\
Balicka 116B, 30-149 Krak\'ow, Poland\\
E-mail address: \url{rttyszka@cyf-kr.edu.pl}
\end{sloppypar}
\end{document}